\renewcommand{\baselinestretch}{1.1}
\renewcommand{\thefootnote}{\fnsymbol{footnote}}	
\newcommand{\textbold}[1]{{\bf\boldmath #1}}
\newcommand{\Zbl}[1]{Zbl:\,\href{http://www.zentralblatt-math.org/zmath/en/search/?q=an:#1}{#1}}
\newcommand{\arXiv}[1]{arXiv:\,\href{http://arxiv.org/abs/#1}{#1}}
\newcommand{\msn}[1]{MR:\,\href{http://www.ams.org/mathscinet-getitem?mr=MR#1}{#1}}
\newcommand{\MSN}[2]{MR:\,\href{http://www.ams.org/mathscinet-getitem?mr=MR#1}{#1}}
\newcommand{\doi}[1]{doi:\,\href{http://dx.doi.org/#1}{#1}}
\newcommand{\cs}[1]{CiteSeer:\,\href{http://citeseerx.ist.psu.edu/viewdoc/summary?doi=#1}{#1}}
\theoremstyle{plain}
\newtheorem{thm}{Theorem}
\newtheorem{lem}[thm]{Lemma}
\theoremstyle{definition}
\begin{document}

\title{\bf A Note on Hadwiger's Conjecture}

\author{David~R.~Wood\,\footnotemark[1]}

\date{\today}

\maketitle

\footnotetext[1]{School of Mathematical Sciences, Monash University, Melbourne, Australia
  (\texttt{david.wood@monash.edu}). Research supported by  the Australian Research Council.}

\renewcommand{\thefootnote}{\arabic{footnote}}

Hadwiger's Conjecture \citep{Hadwiger43} states that every $K_{t+1}$-minor-free graph is $t$-colourable. It is widely considered to be one of the most important conjectures in graph theory; see \citep{Toft-HadwigerSurvey96} for a survey. If every $K_{t+1}$-minor-free graph has minimum degree at most $\delta$, then every $K_{t+1}$-minor-free graph is $(\delta+1)$-colourable by a minimum-degree-greedy algorithm. The purpose of this note is to prove a slightly better upper bound.

\begin{lem}
\label{Main}
Fix $t\geq 2$. Assume that every $K_{t+1}$-minor-free graph has minimum degree at most $\delta$, and that every $K_{t}$-minor-free  graph with exactly $\delta$ vertices has an independent set of size $\alpha$. Then every $K_{t+1}$-minor-free graph is $(\delta-\alpha+2)$-colourable.
\end{lem}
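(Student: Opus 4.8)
The plan is to induct on the number of vertices of a $K_{t+1}$-minor-free graph $G$, the base case being trivial since $\delta-\alpha+2\geq 1$. For the inductive step, I would first use the degree hypothesis to choose a vertex $v$ with $d:=\deg_G(v)\leq\delta$ (the degenerate case $d=0$ is handled by deleting the isolated vertex and recolouring). The crucial structural observation is that $G[N(v)]$ is $K_t$-minor-free: a $K_t$-minor whose branch sets lie in $N(v)$, together with the extra branch set $\{v\}$, which is adjacent to all of $N(v)$, would yield a $K_{t+1}$-minor in $G$, a contradiction.

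Next I would extract a large independent set from the neighbourhood. Since $G[N(v)]$ has only $d$ vertices, I would pad it with $\delta-d$ isolated vertices to obtain a $K_t$-minor-free graph on exactly $\delta$ vertices, apply the second hypothesis to get an independent set of size $\alpha$, and then discard the (at most $\delta-d$) padding vertices. This leaves an independent set $S\subseteq N(v)$ with $|S|=:s\geq\alpha-\delta+d$, equivalently $d-s\leq\delta-\alpha$, which is the inequality that everything hinges on.

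The key manoeuvre is to contract the star with centre $v$ and leaves $S$ into a single vertex $w$, producing a graph $G'$. Since this operation only contracts edges, $G'$ is a minor of $G$ and hence $K_{t+1}$-minor-free, and it has fewer vertices than $G$ (here $s\geq 1$), so by induction it admits a proper $(\delta-\alpha+2)$-colouring $\phi$, say with $\phi(w)=c$. I would lift $\phi$ to $G$ by keeping $\phi$ on every vertex outside $\{v\}\cup S$, assigning the single colour $c$ to \emph{every} vertex of $S$, and finally choosing a colour for $v$ separately. Properness away from $v$ is easy to verify: $S$ is independent, and any neighbour of a vertex of $S$ lying outside $\{v\}\cup S$ is a neighbour of $w$ in $G'$, so it avoids $c$.

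The main point, and the step I expect to need the most care, is the final choice of colour for $v$. Its forbidden colours are $c$ (forced by its neighbours in $S$) together with the colours appearing on $N(v)\setminus S$, of which there are at most $d-s$. Hence at most $1+(d-s)\leq 1+(\delta-\alpha)=\delta-\alpha+1$ colours are unavailable, leaving at least one of the $\delta-\alpha+2$ colours free for $v$. The subtle point to get right is that $v$ itself must \emph{not} receive colour $c$ (only $S$ does), since $v$ is adjacent to $S$: collapsing the whole star onto one colour would fail, and it is precisely the independence of $S$ that allows a single colour to serve all of $S$ while $v$ is coloured on its own.
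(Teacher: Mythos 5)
Your proof is correct and follows essentially the same argument as the paper's: induction on $n$ with a minimum-degree vertex $v$, padding $G[N(v)]$ with $\delta-d$ isolated vertices to extract an independent set $S\subseteq N(v)$ of size at least $\alpha-\delta+d$, contracting the star on $\{v\}\cup S$, and the same count showing at most $1+(d-s)\leq\delta-\alpha+1$ colours are forbidden at $v$. The differences are only cosmetic (you make explicit the verification that the lifted colouring is proper and that $v$ must avoid the colour given to $S$, both of which the paper leaves implicit).
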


\begin{proof}
We prove, by induction on $n$, that every $n$-vertex $K_{t+1}$-minor-free graph is $(\delta-\alpha+2)$-colourable. 
The base case with $n=1$ is trivial since $\delta-\alpha+2\geq 2$. 
Let $G$ be a $K_{t+1}$-minor-free graph. 
Let $v$ be a vertex of minimum degree $d$ in $G$. 
Thus $d\leq \delta$. 
If $d=0$ then, by induction, $G-v$ is $(\delta-\alpha+2)$-colourable, and by assigning to $v$ any colour already in use, we obtain a $(\delta-\alpha+2)$-colouring of $G$, as desired. 
Now assume that $d\geq 1$. 

Let $H$ be the subgraph of $G$ induced by $N(v)$. 
Thus $H$ has $d$ vertices and no $K_t$-minor. 
Let $H'$ be the graph obtained from $H$ by adding $\delta-d$ isolated vertices. 
Then $H'$ has exactly $\delta$ vertices and $H'$ also has no $K_t$-minor. 
By assumption, $H'$ has an independent set of size $\alpha$. 
Thus $H$ has an independent set $T$ of size $\alpha-\delta+d$. 

Let $G'$ be the graph obtained from $G$ by contracting each edge $vw$ where $w\in T$ into a new vertex $z$. 
Since $G'$ is a minor of $G$, $G'$ is $K_{t+1}$-minor-free. Since $d\geq 1$, $G'$ has less vertices than $G$. 
By induction, $G'$ is $(\delta-\alpha+2)$-colourable. 
Colour each vertex in $T$ by the colour assigned to $z$.
Colour each vertex in $G-T-v$ by the colour assigned to the same vertex in $G'$. 
Of the $d$ neighbours of $v$, at least $\alpha-\delta+d$ have the same colour. 
Thus at most $d-(\alpha-\delta+d)+1=\delta-\alpha+1$ colours are present on the neighbours of $v$. 
Hence, at least one of the $\delta-\alpha+2$ colours is not assigned to a neighbour of $v$, and this colour may be assigned to $v$. Thus $G$ is $(\delta-\alpha+2)$-colourable.
\end{proof}

The next lemma summarises some results about independent sets in a $K_{t+1}$-minor-free graphs.
Part (a) is the original result in this direction by \citet{DM82}.
Part (b), which is strong when $t$ is small, is by the author \citep{Wood-DMTCS07}. 
Part (c), which builds upon a similar result by \citet{Fox10}, is due to \citet{BK11}. See  \citep{MM-DM87,BLW11,PT10,KS05,KPT05} for related results. 

\begin{lem}
\label{BigIndSet}
Every $K_{t+1}$-minor-free graph on $n$ vertices has an independent set of size $\alpha$, where  \\
\hspace*{5mm} \textup{(a)} \quad  $(2\alpha -1)t \geq n$\enspace,\\
\hspace*{5mm} \textup{(b)} \quad  $(2\alpha -1)(2t- 5) \geq 2n -5$ for $t\geq 5$\enspace,\\
\hspace*{5mm} \textup{(c)} \quad  $(2-\gamma)\alpha t\geq n$\enspace, \\
where $\gamma= (80-\sqrt{5392})/126 \approx  0.0521\dots$.
\end{lem}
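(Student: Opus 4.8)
The plan is to treat the three bounds separately, since each comes from a different source, and to present the classical bound (a) in full while invoking (b) and (c) as the deep results they are.

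For (a), I would recover the statement from the theorem of \citet{DM82} that every graph $G$ satisfies $h(G)\,(2\alpha(G)-1)\ge n$, where $h(G)$ denotes the Hadwiger number (the largest $r$ admitting a $K_r$-minor). Since a $K_{t+1}$-minor-free graph has $h(G)\le t$, this immediately yields $(2\alpha-1)t\ge n$. To prove the Duchet--Meyniel inequality itself, I would first reduce to the case that $G$ is connected, and then construct a clique-minor model $B_1,\dots,B_r$ of pairwise-adjacent connected branch sets that partition $V(G)$, chosen with $r$ as large as possible; maximising the number of parts (and then the number of covered vertices) forces the model to be spanning and gives $r\le h(G)\le t$. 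The crux is to arrange that every branch set has at most $2\alpha-1$ vertices, for then $n=\sum_i|B_i|\le r(2\alpha-1)\le t(2\alpha-1)$. This size bound is the genuine content of the argument: one shows that a branch set with $2\alpha$ or more vertices could be resplit so as to increase the number of parts while retaining all the required adjacencies, contradicting maximality.

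For (b) and (c) I would appeal to the stated references. Bound (b), from \citet{Wood-DMTCS07}, improves the Duchet--Meyniel denominator for small $t$ by exploiting the known extremal edge counts for $K_{t+1}$-minor-free graphs; the coefficient $2t-5$ reflects the density bounds available in that range, and I would invoke the result directly. Bound (c), due to \citet{BK11} and building on \citet{Fox10}, replaces the factor $2$ by $2-\gamma$ via the dense-minor method: assuming for contradiction that $\alpha$ is smaller than $\tfrac{n}{(2-\gamma)t}$, one contracts $G$ into a minor on roughly $n/\alpha$ vertices of guaranteed high density, and then applies a near-optimal bound on clique minors in dense graphs to force a $K_{t+1}$-minor, contradicting the hypothesis. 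The value of $\gamma$ emerges from optimising the trade-off between the density so obtained and the clique-minor threshold.

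The hard part is (c): both the extraction of a sufficiently dense minor and the quantitatively tight clique-minor-in-dense-graphs estimate are delicate, and the exact constant $\gamma=(80-\sqrt{5392})/126$ comes out of an optimisation that I would not reproduce but rather cite from \citet{BK11}. Accordingly, my proposal is to give a self-contained proof of (a) and to establish (b) and (c) by appeal to the cited references.
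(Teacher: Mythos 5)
Your treatment of (b) and (c) agrees with the paper, which in fact proves none of the three parts: the lemma is stated purely as a summary of cited results, (a) from \citet{DM82}, (b) from \citet{Wood-DMTCS07}, and (c) from \citet{BK11}, so citation alone would have sufficed throughout. The gap is in the one part you chose to prove. Your argument for (a) selects a spanning model of pairwise adjacent connected branch sets $B_1,\dots,B_r$ with $r$ maximum, and claims maximality forces $|B_i|\le 2\alpha-1$ for every $i$, because a larger branch set ``could be resplit so as to increase the number of parts while retaining all the required adjacencies''. That claim is false. Take $G$ to be two copies of $K_4$ sharing a single vertex $c$, so $n=7$, $\alpha=2$, and the Hadwiger number $h(G)$ equals $4$. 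Any two branch sets avoiding $c$ must be adjacent, hence lie in the same component of $G-c$, i.e.\ in the same triangle; so in any spanning model every part except the one containing $c$ lies inside one triangle, and the part containing $c$ has at least $4=2\alpha$ vertices. The maximum $r$ is $4$, attained by $\{c\}\cup T_1$ together with the three singletons of the other triangle $T_2$, and the part $\{c\}\cup T_1$ cannot be resplit: its vertices other than $c$ have no neighbours in $T_2$, and indeed any resplitting would exhibit a $K_5$-minor in a graph whose Hadwiger number is $4$. So maximality does not make branch sets small, and the count $n=\sum_i|B_i|\le r(2\alpha-1)$ --- the heart of your proof --- is unsupported. (The Duchet--Meyniel inequality itself still holds in this example; it is your route to it that breaks.)

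The genuine Duchet--Meyniel argument is different and needs two ingredients missing from your sketch. First, every connected graph $G$ has a \emph{connected dominating set} $D$ with $|D|\le 2\alpha(G)-1$: grow $D$ from a single vertex, and while some vertex lies at distance exactly $2$ from $D$, add that vertex together with a common neighbour; the far vertices added form an independent set, so there are at most $\alpha-1$ iterations. Second, induct on $n$ for connected $G$ (the disconnected case reduces to components): if $G-D$ is empty then $n\le 2\alpha-1$ trivially; otherwise let $C_1,\dots,C_m$ be the components of $G-D$. Since $D$ dominates $G$, it is adjacent to every branch set of a clique minor contained in any single $C_j$, whence $h(G)\ge 1+\max_j h(C_j)$; combining this with the induction hypothesis $h(C_i)\,(2\alpha(C_i)-1)\ge |C_i|$ and with $\sum_i\alpha(C_i)=\alpha(G-D)\le\alpha$ gives
\[
n \;=\; |D|+\sum_i |C_i| \;\le\; (2\alpha-1)+\Bigl(\max_j h(C_j)\Bigr)\sum_i\bigl(2\alpha(C_i)-1\bigr) \;\le\; (2\alpha-1)\bigl(1+\max_j h(C_j)\bigr) \;\le\; h(G)\,(2\alpha-1).
\]
Note that the minor produced here is not spanning and its branch sets are not uniformly small: the counting is global, over all components at once, which is precisely what your per-branch-set bound tried to shortcut. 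If you want (a) to be self-contained, this is the argument to write out; otherwise simply cite \citet{DM82}, which is all the paper does.
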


\begin{thm}
\label{Corollary}
Fix $t\geq 6$. Assume that every $K_{t+1}$-minor-free graph $G$ has minimum degree at most $\delta$.
Then $$\chi(G)\leq \delta- \frac{2\delta -5}{4t - 14}+\frac{3}{2}$$
and $$\chi(G)\leq\delta\left(1-\frac{1}{(2-\gamma)(t-1)}\right)+2\enspace,$$
where $\gamma= (80-\sqrt{5392})/126 \approx  0.0521\dots$.
\end{thm}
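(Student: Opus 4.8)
The plan is to apply Lemma~\ref{Main}, feeding it the independence bounds supplied by Lemma~\ref{BigIndSet}. The one point requiring genuine attention is an index shift. Lemma~\ref{Main} demands a lower bound on the independence number of $K_{t}$-minor-free graphs on \emph{exactly} $\delta$ vertices, whereas Lemma~\ref{BigIndSet} is stated for $K_{t+1}$-minor-free graphs. Since a $K_{t}$-minor-free graph is exactly a $K_{(t-1)+1}$-minor-free graph, I would invoke Lemma~\ref{BigIndSet} with $t$ replaced by $t-1$ and with $n=\delta$. The two displayed inequalities then come from parts~(b) and~(c) respectively.

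For the first inequality I would use part~(b). The hypothesis $t\geq 6$ is precisely what guarantees $t-1\geq 5$, so part~(b) applies after the shift and produces an independent set of size $\alpha$ satisfying $(2\alpha-1)\bigl(2(t-1)-5\bigr)\geq 2\delta-5$, that is $(2\alpha-1)(2t-7)\geq 2\delta-5$. Solving for $\alpha$ gives $\alpha\geq \tfrac{1}{2}+\tfrac{2\delta-5}{4t-14}$. Lemma~\ref{Main} now yields $\chi(G)\leq\delta-\alpha+2$, and substituting this lower bound on $\alpha$ gives exactly $\delta-\tfrac{2\delta-5}{4t-14}+\tfrac{3}{2}$.

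For the second inequality I would repeat the argument verbatim with part~(c) in place of part~(b). After replacing $t$ by $t-1$ and setting $n=\delta$, part~(c) gives $(2-\gamma)\alpha(t-1)\geq\delta$, hence $\alpha\geq\tfrac{\delta}{(2-\gamma)(t-1)}$. Feeding this into $\chi(G)\leq\delta-\alpha+2$ produces $\delta\bigl(1-\tfrac{1}{(2-\gamma)(t-1)}\bigr)+2$, as claimed.

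I do not expect a serious obstacle here; the result is essentially the composition of the two preceding lemmas. The only things that must be checked are that the index shift identifies the minor-closed class correctly (so that the condition $t\geq 5$ in part~(b) becomes the stated hypothesis $t\geq 6$), and that integrality causes no difficulty. The latter is automatic: Lemma~\ref{BigIndSet} guarantees a genuine independent set whose integer size $\alpha$ meets the displayed arithmetic inequalities, and since $\delta-\alpha+2$ is merely an upper bound on the integer $\chi(G)$, any real lower bound on $\alpha$ transfers directly to the stated real upper bound on $\chi(G)$ with no rounding required.
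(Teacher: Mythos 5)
Your proposal is correct and follows exactly the paper's own argument: shift the index in Lemma~\ref{BigIndSet} (apply it with $t-1$ in place of $t$ and $n=\delta$, which is where the hypothesis $t\geq 6$ enters), extract the lower bounds on $\alpha$ from parts~(b) and~(c), and substitute into the bound $\chi(G)\leq\delta-\alpha+2$ of Lemma~\ref{Main}. Your explicit remarks on the index shift and on integrality are points the paper passes over silently, but they are the same proof.
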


\begin{proof}
Lemma~\ref{BigIndSet}(b) implies that every $K_t$-minor-free graph with $\delta$ vertices has an independent set of size $\alpha$, where $(2\alpha -1)(2(t-1)- 5) \geq 2\delta -5$. Thus $\alpha\geq \frac{2\delta-5}{4t-14}+\frac{1}{2}$. Lemma~\ref{Main} implies that $\chi(G)\leq \delta-\alpha+2\leq \delta-\frac{2\delta-5}{4t-14}+\frac{3}{2}$. Similarly, Lemma~\ref{BigIndSet}(c) implies the second result. 
\end{proof}

Note that \citet{Kostochka82,Kostochka84} and \citet{Thomason01,Thomason84} independently proved that  $\delta\leq ct\sqrt{\log t}$ is the best possible upper bound on the minimum degree of $K_{t+1}$-minor-free graphs. Thus such graphs are $ct\sqrt{\log t}$-colourable. Unfortunately, Theorem~\ref{Corollary} makes no asymptotic improvement to this result. 

\medskip
We now apply these results for particular values of $t$. 

\textbold{$t=2$:} $K_3$-minor-free graphs are exactly the forests, and every forest has a vertex of degree 1. Thus $\chi(G)\leq 1-1+2=2$  by Lemma~\ref{Main}, which is tight. 

\textbold{$t=3$:}  Every $K_4$-minor-free graph $G$ has minimum degree at most 2, and every 2-vertex graph has an independent set of size 1. Thus $\chi(G)\leq 2-1+2=3$  by Lemma~\ref{Main}, which is tight. 

\textbold{$t=4$:}  Every $K_5$-minor-free graph with at least 3 vertices has at most $3n-6$ edges \citep{Mader68}. 
Thus every $K_5$-minor-free graph  has average degree less than 6 and minimum degree at most 5. Every 5-vertex $K_4$-minor-free graph has an independent set of size 2. Thus $\chi(G)\leq 5-2+2=5$  by Lemma~\ref{Main}. The 4-colour theorem and Wagner's characterisation \cite{Wagner37} implies that $\chi(G)\leq 4$. 

\textbold{$t=5$:}  Every $K_6$-minor-free graph with at least 4 vertices has at most $4n-10$ edges \citep{Mader68}. Thus Every $K_6$-minor-free graph has average degree less than 8, and minimum degree at most 7. Every 7-vertex $K_5$-free graph has an independent set of size $2$. Thus every $K_6$-minor-free graph is 7-colourable by Lemma~\ref{Main}, which is inferior to the result by \citet{RST-Comb93} who proved that such graphs are 5-colourable. Note that it is open whether  every $K_6$-minor-free graph has minimum degree at most 6 (see \citep{BJW11}). 

\textbold{$t=6$:}  Every $K_7$-minor-free graph with at least $5$ vertices has at most $5n-15$ edges \citep{Mader68}.
Thus every $K_7$-minor-free graph  has average degree less than 10, and minimum degree at most 9. Every 9-vertex $K_6$-free graph has an independent set of size $2$. Thus every $K_7$-minor-free graph is 9-colourable  by Lemma~\ref{Main}. 
\citet{BA13} proved that every $K_7$-minor-free graph is 8-colourable. 

We conjecture that every $K_7$-minor-free graph has minimum degree at most $7$ (which would be tight for $K_{1,2,2,2,2}$). This conjecture would imply that every $K_7$-minor-free graph is 7-colourable. Hadwiger's conjecture says that $K_7$-minor-free graphs are 6-colourable. 

\textbold{$t=7$:}  Every $K_8$-minor-free graph with at least 5 vertices has at most $6n-20$ edges \citep{Jorgensen94}, 
Thus every $K_8$-minor-free graph has average degree less than 12, and minimum degree at most 11. Every 11-vertex $K_7$-free graph has an independent set of size $2$. Thus every $K_8$-minor-free graph is 11-colourable by Lemma~\ref{Main}. \citet{BA13} proved that every $K_7$-minor-free graph is $10$-colourable.

\citet{Jorgensen94} characterised the $K_8$-minor-free graphs with $6n-20$ edges as those obtained from copies of $K_{2,2,2,2,2}$ by pasting on 5-cliques. Such graphs have minimum degree 8. We conjecture that every $K_8$-minor-free has minimum degree at most 8, which would imply that such graphs are 8-colourable. Hadwiger's conjecture says that $K_8$-minor-free graphs are 7-colourable. 

\textbold{$t=8$:}  Every $K_9$-minor-free graph $G$ has at most $7n-27$ edges \citep{ST06}, has average degree less than 14, and minimum degree at most 13. Every 13-vertex $K_8$-free graph has an independent set of size $2$. Thus 
every $K_9$-minor-free graph is $13$-colourable by Lemma~\ref{Main}. 

\citet{ST06} characterised the $K_9$-minor-free graphs with $7n-27$ edges as those obtained from copies of $K_{2,2,2,3,3}$ and $K_{1,2,2,2,2,2}$ by pasting on 6-cliques. Such graphs have minimum degree at most 10. We conjecture that every $K_9$-minor-free has minimum degree at most 10, which would imply that such graphs are 10-colourable by Lemma~\ref{Main}. Hadwiger's conjecture says that $K_9$-minor-free graphs are 8-colourable. 

\textbold{$t=9$}:  Every $K_{10}$-minor-free graph with at least 10 vertices has at most $11n- 66$ edges \citep{Song04}. Thus every $K_{10}$-minor-free graph has average degree less than $22$, and minimum degree at most 21. By Lemma~\ref{BigIndSet}(b), every $K_9$-minor-free 21-vertex graph has an independent set of 3 vertices. Thus  every $K_{10}$-minor-free graph is $20$-colourable by Lemma~\ref{Main} (or directly by Theorem~\ref{Corollary}). 

\textbold{$t=10$:}  Every $K_{11}$-minor-free graph with at least 11 vertices has at most $13n- 89$ edges \citep{Song04}. Thus every $K_{11}$-minor-free graph has average degree less than $26$, and minimum degree at most 25. By Lemma~\ref{BigIndSet}(b), every $K_{10}$-minor-free 25-vertex graph has an independent set of 3 vertices. Thus  every $K_{11}$-minor-free graph is $24$-colourable by Lemma~\ref{Main} (or directly by Theorem~\ref{Corollary}). 

We emphasis that in the cases $t=9$ and $t=10$, while it is likely that all of the stated bounds are far from optimal\footnote{See Chapter 6 of the Ph.D.\ thesis of \citet{Song04} for a discussion of the likely extremal graphs}, the utility of our approach is evident, since we may take $\alpha=3$ in these cases. 


\def\soft#1{\leavevmode\setbox0=\hbox{h}\dimen7=\ht0\advance \dimen7
  by-1ex\relax\if t#1\relax\rlap{\raise.6\dimen7
  \hbox{\kern.3ex\char'47}}#1\relax\else\if T#1\relax
  \rlap{\raise.5\dimen7\hbox{\kern1.3ex\char'47}}#1\relax \else\if
  d#1\relax\rlap{\raise.5\dimen7\hbox{\kern.9ex \char'47}}#1\relax\else\if
  D#1\relax\rlap{\raise.5\dimen7 \hbox{\kern1.4ex\char'47}}#1\relax\else\if
  l#1\relax \rlap{\raise.5\dimen7\hbox{\kern.4ex\char'47}}#1\relax \else\if
  L#1\relax\rlap{\raise.5\dimen7\hbox{\kern.7ex
  \char'47}}#1\relax\else\message{accent \string\soft \space #1 not
  defined!}#1\relax\fi\fi\fi\fi\fi\fi}

\end{document}